\NeedsTeXFormat{LaTeX2e}
\documentclass{lms}
\usepackage{amsmath,systeme,amsopn,amssymb,mathrsfs,times,newtxtext,newtxmath,upref}
\usepackage[mathscr]{eucal}
\usepackage{color}
\usepackage{enumitem}
% Enumerations
\setlist{label={$$\roman{enumi}\kern1pt$)$}}

% ---------------TEOREMAS--------------------------------------------------

\newtheorem{thm}{Theorem}[section]
\newtheorem{prop}[thm]{Proposition}
\newtheorem{cor}[thm]{Corollary}

\numberwithin{equation}{section}

\newenvironment{demode}
  {\noindent {{\it Proof of }}}%
  {\par \hfill \fbox{}}

% -----------COMANDOS----------------------------------------------------

% Subespacios
%
\newcommand{\St}{\mathcal{S}}

\newcommand{\HH}{\mathcal{H}}

\newcommand{\M}{\mathcal{M}}

\newcommand{\N}{\mathcal{N}}
\newcommand{\X}{\mathcal{X}}

\newcommand{\mc}[1]{\mathcal{#1}}

\newcommand{\ra}{\rightarrow}

\DeclareMathOperator{\lat}{Lat}

% -------------OPERADORES---------------------------------------------------

\DeclareMathOperator{\Span}{span}

% --------------DOCUMENTO---------------------------------------------------

\title[A note on a Halmos problem]{A note on a Halmos problem}

\author{Maximiliano Contino and Eva A. Gallardo-Guti\'errez}

\classno{47A15 (primary), 47B37, 47B38 (secondary)}

\extraline{First author is supported by María Zambrano Postdoctoral Grant CT33/21 at Universidad Complutense de Madrid financed by the Ministry of Universities with Next Generation EU funds, by Grant CEX2019-000904-S funded by MCIN/AEI/10.13039/501100011033 and by CONICET PIP 11220200102127CO. Second author is partially supported by Plan Nacional  I+D grant no. PID2019-105979GB-I00, Spain, the Spanish Ministry of Science and Innovation, through the ``Severo Ochoa Programme for Centres of Excellence in R\&D'' (CEX2019-000904-S) and from the Spanish National Research Council, through the ``Ayuda extraordinaria a Centros de Excelencia Severo Ochoa'' (20205CEX001). }

\date{July 2023, revised July 2024}

\begin{document}

\maketitle

\begin{abstract}
		We address the existence of non-trivial closed invariant subspaces of operators $T$ on Banach spaces whenever their square $T^2$ have or, more generally, whether there exists a polynomial $p$ with $\mbox{deg}(p)\geq 2$ such that the lattice of invariant subspaces of $p(T)$ is non-trivial. In the Hilbert space setting, the $T^2$-problem was posed by Halmos in the seventies and in 2007, Foias, Jung, Ko and Pearcy \cite{Foias} conjectured it could be equivalent to the \emph{Invariant Subspace Problem}.
\end{abstract}

\section{Introduction and Preliminaries}

In 1986 Charles Read \cite{Re1986} showed the existence of a linear bounded operator $R$ on the Banach space $\ell^1$ such that while $R$ has no non-trivial closed invariant subspaces, all its powers $R^n$ for $n\geq 2$ do (see also \cite{Re1985}). More recently, Gallardo-Guti\'errez and Read \cite{GR} constructed a \emph{quasinilpotent operator}  such that no non-constant polynomial $p(T)$ has non-trivial closed invariant subspaces (even no non-constant analytic germ $f(T)$).

\smallskip

Broadly speaking, Read's construction provided a counterexample in the setting of Banach spaces to a problem raised by Halmos in the seventies  in the context of Hilbert spaces in \cite{Halmos} namely, whether every linear bounded operator $T$ on a Hilbert space such that its square $T^2$ has a non-trivial closed invariant subspace has necessarily a non-trivial closed invariant subspace. Apparently, although the latter problem remains open \footnote{The article in which Per Enflo claims to have solved the Invariant Subspace Problem in the setting of Hilbert spaces (see \texttt{https://arxiv.org/abs/2305.15442}) is being reviewed for publication at the time of writing.}, there have been considerable efforts to solve it (we refer to the work of Foias, Jung, Ko and Pearcy \cite{Foias} and the references therein).

\smallskip

In this framework, our goal is to study linear bounded operators $T$ acting on complex Banach spaces $\mc X$ such that $T^2$, or more generally a polynomial in $T$, has non-trivial closed invariant subspaces. Note that $\mc X$ must be an infinite dimensional separable Banach space and the polynomial must be of degree strictly greater than $1$, otherwise the problem is straightforward. In this setting, our main result reads as follows:

\begin{thm*} Let $\X$ be an infinite dimensional separable complex Banach space, $T$ a quasinilpotent linear bounded operator on $\X$ and $n\geq 1$. The following conditions are equivalent:
	\begin{enumerate}
        \item $T$ has a nontrivial closed invariant subspace;
		\item $(T-\lambda I)^n$ has a nontrivial closed invariant subspace for every $\lambda \in \mathbb{C};$
		\item $(T-\lambda I)^n$ has a nontrivial closed invariant subspace for some $\lambda \in \mathbb{C} \setminus \{0\}.$
	\end{enumerate}
\end{thm*}

The proof of the stated theorem as well as related results addressing the existence of non-trivial closed invariant subspaces of quasinilpotent operators $T$ on Banach spaces whenever there exists a polynomial $p$ with $\mbox{deg}(p)\geq 2$ such that the lattice of invariant subspaces of $p(T)$ is non-trivial will be shown in Section \ref{Section 2}.

\smallskip

In Section \ref{Section 3}, we deal with a matrix approach of the problem considered in \cite{Foias} allowing us to show conditions which ensure that the lattices of $T$ and $T^2$ coincide. Moreover, if $T\in L(\X)$, the Banach algebra of the bounded linear operators on $\mc X$, and $A_T \in L(\X \times \X)$ denotes the operator given by
$$
A_T=\begin{bmatrix}
0 & I \\ T & 0  \end{bmatrix},
$$
Theorem \ref{teo1} states that $T$ has a nontrivial closed invariant subspace if and only if $A_T$ and $A_{-T}$ has a common nontrivial closed invariant subspace. As a consequence, we observe that if $A_T^2$ has a nontrivial closed hyperinvariant subspace, then $T$ has a nontrivial closed invariant subspace.

\smallskip

Throughout the text, given $T \in L(\X)$, the spectrum of $T$ will be noted by $\sigma(T)$, $\rho(T)$ will stand for  the resolvent of $T$, that is, $\rho(T)=\mathbb{C} \setminus \sigma(T)$, and the full spectrum, namely, the union of $\sigma(T)$ and all the bounded connected components of $\rho(T)$, will be denoted by $\eta(\sigma(T))$. Finally, recall that $T$ is quasinilpotent if $\sigma(T)=\{0\}$.

\section{Polynomial equations and invariant subspaces} \label{Section 2}

In this section, we deal with the existence of non-trivial closed invariant subspaces for $T$ assuming particular polynomials in $T$ do have. As stated in the introduction, the following theorem holds:

\begin{thm} \label{thm1} Let $\X$ be an infinite dimensional separable complex Banach space, $T$ a quasinilpotent linear bounded operator on $\X$ and $n\geq 1$. The following conditions are equivalent:
	\begin{enumerate}
        \item $T$ has a nontrivial closed invariant subspace;
		\item $(T-\lambda I)^n$ has a nontrivial closed invariant subspace for every $\lambda \in \mathbb{C};$
		\item $(T-\lambda I)^n$ has a nontrivial closed invariant subspace for some $\lambda \in \mathbb{C} \setminus \{0\}.$
	\end{enumerate}
\end{thm}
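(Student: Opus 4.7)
The implications $(i)\Rightarrow(ii)\Rightarrow(iii)$ are immediate: every closed $T$-invariant subspace is invariant under each polynomial in $T$, and $(iii)$ is a particular instance of $(ii)$. The content of the theorem lies in the reverse implication $(iii)\Rightarrow(i)$, and this is where the plan concentrates.

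Fix $\lambda\neq 0$ and let $M$ be a nontrivial closed invariant subspace for $S:=(T-\lambda I)^n$. The strategy is to realize $T-\lambda I$ as a norm limit of polynomials in $S$; once this is done, $M$ is automatically invariant for $T-\lambda I$, hence for $T$. Quasinilpotence is precisely what makes this feasible: by the spectral mapping theorem, $\sigma(T-\lambda I)=\{-\lambda\}$ and $\sigma(S)=\{\mu\}$ with $\mu=(-\lambda)^n\neq 0$, so $S-\mu I$ has spectral radius zero.

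Write $p(z)=z^n$. Since $p'(-\lambda)=n(-\lambda)^{n-1}\neq 0$, the holomorphic inverse function theorem provides a function $g$ analytic on a disk $U$ centered at $\mu$ with $g(\mu)=-\lambda$ and $g(z)^n=z$ for $z\in U$. The composition rule of the Riesz--Dunford functional calculus, together with the identity $(g\circ p)(z)=z$ on a neighbourhood of $\sigma(T-\lambda I)=\{-\lambda\}$, then yields
\[
g(S)=g\bigl(p(T-\lambda I)\bigr)=(g\circ p)(T-\lambda I)=T-\lambda I.
\]
To finish, I expand $g(z)=\sum_{k\ge 0}a_k(z-\mu)^k$ on $U$. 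Because $S-\mu I$ is quasinilpotent, $\|(S-\mu I)^k\|^{1/k}\to 0$, so the operator series $\sum_{k\ge 0}a_k(S-\mu I)^k$ converges in operator norm to $g(S)=T-\lambda I$. Each partial sum is a polynomial in $S$ and therefore leaves $M$ invariant; closedness of $M$ then gives $(T-\lambda I)M\subseteq M$ and hence $TM\subseteq M$, which proves $(i)$.

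The main obstacle is that invariant subspaces of an operator need not be preserved by arbitrary holomorphic functions of it, so the step $g(S)M\subseteq M$ is not free. What rescues the argument is precisely the quasinilpotence of $S-\mu I$: it allows the Dunford integral representation of $g(S)$ to be replaced by a genuinely norm-convergent power series in polynomials of $S$, which is the only property of $g(S)$ that is compatible with invariant-subspace preservation by closedness.
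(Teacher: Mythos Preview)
Your argument is correct. The key identity $g(S)=T-\lambda I$ follows from the composition rule for the Riesz--Dunford calculus exactly as you say, and the quasinilpotence of $S-\mu I$ makes the Taylor series $\sum_k a_k(S-\mu I)^k$ converge in norm, so $M$ is invariant under $T-\lambda I$ and hence under $T$.

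Your route is genuinely different from the paper's. The paper defines $U:=f(S)$ with $f(z)=z^{1/n}$ (some branch), invokes \cite[Theorem~2.14]{Radjavi} to conclude $\operatorname{Lat}(S)=\operatorname{Lat}(U)$, and then---since it does not identify which $n$-th root of $S$ the operator $U$ is---appeals to Matache's result (Proposition~\ref{Matache}) on commuting operators with equal $n$-th powers to pass from $U$ to $T-\lambda_0 I$. You bypass both external results: by choosing $g$ as the specific local inverse of $z\mapsto z^n$ sending $\mu$ to $-\lambda$, the composition rule gives $g(S)=T-\lambda I$ directly, and your explicit power-series approximation replaces the lattice theorem. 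What you gain is a self-contained argument needing only the functional calculus; what the paper's approach buys is a template that would still work in situations where one only knows $U^n=(T-\lambda I)^n$ without being able to pin down $U$.

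One small remark on your closing paragraph: the ``obstacle'' you flag is not really present here. Because $\sigma(S)=\{\mu\}$ is a single point (hence its own full spectrum), every function holomorphic near $\mu$ is already a uniform limit of polynomials on a neighbourhood of $\sigma(S)$ by Runge's theorem, so $g(S)$ automatically lies in the norm closure of polynomials in $S$ and preserves closed invariant subspaces. Your power-series computation is a concrete and perfectly valid way of exhibiting this, but the phenomenon you warn against---holomorphic functional calculus failing to preserve invariant subspaces---only arises when $\sigma(S)$ has holes, which cannot happen for a singleton.
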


In order to prove it, we will make use of the following result proved by Matache.

\begin{prop}[{\cite[Corollary 3]{Matache}}] \label{Matache} Suppose that $A, B \in L(\X)$ are commuting operators such that $A^n=B^n$ for some $n\geq 1.$ Then either both lattices of $A$ and $B$ are trivial or $A$ and $B$ have common nontrivial closed invariant subspaces.
\end{prop}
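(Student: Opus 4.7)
The plan is to exploit the factorization of $A^n - B^n$ as a product of commuting linear factors, which is available precisely because $A$ and $B$ commute. Writing $\omega = e^{2\pi i/n}$, one has
$$A^n - B^n \;=\; \prod_{k=0}^{n-1}(A - \omega^k B),$$
and setting $T_k := A - \omega^k B$, the operators $T_0,\dots,T_{n-1}$ pairwise commute, each commutes with both $A$ and $B$, and $\prod_{k=0}^{n-1} T_k = 0$ by hypothesis. The whole argument will revolve around extracting a common invariant subspace of $A$ and $B$ from this product factorization.

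The first step I would carry out is the observation that at least one of the $T_k$ must fail to be injective. Indeed, a composition of commuting injective operators is injective, whereas the product here equals the zero operator on the nonzero space $\X$, whose kernel is all of $\X$. Hence there exists an index $k_0$ with $\{0\} \subsetneq \kernel(T_{k_0}) \subseteq \X$.

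From here I would split into two cases. If $\kernel(T_{k_0}) \subsetneq \X$, then this kernel is a nontrivial proper closed subspace, and it is invariant under both $A$ and $B$ because each of them commutes with $T_{k_0}$; this is exactly the required common nontrivial closed invariant subspace. Otherwise $\kernel(T_{k_0}) = \X$, that is, $A = \omega^{k_0} B$. In this degenerate case I would invoke the elementary fact that a closed subspace of $\X$ is invariant under a nonzero scalar multiple of an operator if and only if it is invariant under the operator itself, so that the lattices of $A$ and $B$ coincide. Consequently, either both lattices are trivial, or any nontrivial element of this common lattice supplies the desired common closed invariant subspace.

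The conceptual core of the proof is the commuting factorization, and the only genuine subtlety is the handling of the degenerate scenario $T_{k_0} = 0$: this case does not produce an invariant subspace directly from a kernel, but is rescued by the fact that $A$ and $B$ are then scalar multiples of one another, so their lattices agree. All remaining invariance claims are automatic from the commutation relations, which is why the argument ultimately reduces to this short case analysis.
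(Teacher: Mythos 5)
Your proof is correct, but bear in mind that there is no proof in the paper to compare it with: the statement is imported wholesale from Matache's article as \cite[Corollary 3]{Matache} and used as a black box in the proof of Theorem \ref{thm1}. Your argument --- factoring the homogeneous polynomial $x^n-y^n$ into the commuting linear factors $\prod_{k=0}^{n-1}(A-\omega^k B)$, noting that a vanishing product of injective commuting operators on a nonzero space is impossible so some factor $T_{k_0}$ has nontrivial kernel, and then splitting according to whether that kernel is proper (in which case it is closed, nontrivial, and invariant under everything commuting with $T_{k_0}$, in particular under $A$ and $B$) or all of $\X$ (in which case $A=\omega^{k_0}B$ and hence $\lat A=\lat B$, since $\omega^{k_0}\neq 0$) --- is essentially Matache's original argument, so you have supplied the omitted proof rather than an alternative to one. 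Every step checks out: the factorization is a polynomial identity that survives substitution of commuting operators, the kernel of a bounded operator is closed, and the degenerate case is correctly rescued by the fact that nonzero scalar multiples of an operator have the same lattice of closed invariant subspaces; both branches land in the stated dichotomy.
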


\begin{demode}{\it Theorem \ref{thm1}.} Let $n>1$ be fixed, otherwise there is nothing to prove. The only non-trivial implication is that condition $iii)$ implies that $T$ has non-trivial closed invariant subspaces. Hence, let us assume that $(T-\lambda_0 I)^n$ has a nontrivial closed invariant subspace for some $\lambda_0 \in \mathbb{C} \setminus \{0\}.$

It holds that $\sigma((T-\lambda_0I)^n)=\{{(-\lambda_0})^n\},$ because $T$ is quasinilpotent. Since $\lambda_0 \not = 0,$ there exists a connected open set $\mathcal{V}$ containing the full spectrum of $(T-\lambda_0I)^n$, such that
$$f(z):=z^{\frac{1}{n}}, \quad z\in \mathcal{V}$$ is analytic and one-to-one in $\mathcal{V}.$
Set $U:=f((T-\lambda_0I)^n),$ then $U$ is well defined, bounded and since $T-\lambda_0 I$ commutes with $(T-\lambda_0 I)^n$,  $U$ commutes with $T-\lambda_0 I.$

Theorem 2.14 in \cite{Radjavi} yields that the lattice of $(T-\lambda_0I)^n$ and $U$ coincides. Finally, since  $$U^n=(T-\lambda_0 I)^n,$$ by Proposition \ref{Matache}, $T-\lambda_0 I$ has a nontrivial closed invariant subspace. From here, condition $i)$ follows and Theorem \ref{thm1} is proved.
\end{demode}

\begin{cor} \label{corpoly} Let $\X$ be an infinite dimensional separable complex Banach space, $T\in L(\X)$ quasinilpotent and $p$ a polynomial of degree $n=\deg (p)\geq 2$ such that $p^\prime$ (the derivative of $p$) has a non-zero root of multiplicity $n-1.$ Then $T$  has a nontrivial closed invariant subspace if and only if $p(T)$ has a nontrivial closed invariant subspace.
\end{cor}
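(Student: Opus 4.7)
The plan is to reduce the corollary to Theorem \ref{thm1} by observing that the hypothesis on $p$ forces $p(T)$ to be an affine function of $(T-\lambda_0 I)^n$ for a suitable non-zero $\lambda_0$, which preserves the invariant subspace lattice.

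First I would dispose of the easy direction: if $M$ is a non-trivial closed invariant subspace of $T$, then $M$ is automatically invariant under $p(T)$, so $p(T)$ inherits a non-trivial closed invariant subspace from $T$ without any use of the structural hypothesis on $p$.

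For the interesting direction, I would exploit the structure imposed by the assumption that $p'$ has a non-zero root of multiplicity $n-1$. Since $\deg(p')=n-1$, this forces $p'(z)=c(z-\lambda_0)^{n-1}$ for some $\lambda_0\in\mathbb{C}\setminus\{0\}$ and some constant $c\neq 0$. Integrating gives
\[
p(z)=\tfrac{c}{n}(z-\lambda_0)^{n}+d,
\]
for some $d\in\mathbb{C}$. Therefore $p(T)=\alpha(T-\lambda_0 I)^n+dI$ with $\alpha=c/n\neq 0$. Adding a scalar multiple of the identity does not modify the invariant subspace lattice, nor does multiplying by a non-zero scalar, so $\lat(p(T))=\lat((T-\lambda_0 I)^n)$.

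Assuming $p(T)$ has a non-trivial closed invariant subspace, the previous identification yields a non-trivial closed invariant subspace for $(T-\lambda_0 I)^n$ with $\lambda_0\neq 0$. Theorem \ref{thm1}, applied with this $\lambda_0$, then delivers a non-trivial closed invariant subspace for $T$, completing the equivalence. I do not anticipate a genuine obstacle here: the only subtlety is verifying that the hypothesis on the multiplicity of the root of $p'$ really forces $p$ to be an affine shift of a pure $n$-th power at a non-zero point, which is exactly what allows Theorem \ref{thm1} to kick in; once that algebraic observation is made, the rest is a direct appeal to the main theorem.
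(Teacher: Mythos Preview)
Your proof is correct and follows essentially the same approach as the paper: both recognize that the hypothesis forces $p(z)=\alpha(z-\lambda)^n+\mu$ with $\alpha,\lambda\neq 0$, so that $\lat(p(T))=\lat((T-\lambda I)^n)$, and then invoke Theorem~\ref{thm1}. The paper simply asserts this form of $p$ directly, while you derive it by integrating $p'$, but the argument is the same.
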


\begin{proof} By hypothesis there exist $\alpha, \lambda \in \mathbb{C}\setminus \{0\}$ and $\mu \in \mathbb{C}$ such that $p(z)=\alpha(z-\lambda)^n+\mu.$ Then $p(T)$ has a nontrivial closed invariant subspace if and only if $(T-\lambda I)^n$ has a nontrivial closed invariant subspace if and only if  $T$ has a nontrivial closed invariant subspace, where we used Theorem \ref{thm1}.
\end{proof}

Note that Corollary \ref{corpoly} does not imply anything about the possible multiplicity of the roots of $p.$ For instance, suppose that $p$ is a polynomial of degree $3.$ Then $p(z)=az^3+bz^2+cz+d$ with $a \in \mathbb{C} \setminus \{0\}$ and $b,c,d \in \mathbb{C}.$ It can be checked that $p'$ has a non-zero root of multiplicity $2$ if and only if $b\not =0$ and $b^2=3ac.$
It is clear that if $p$ has a non-zero root of multiplicity $3$ then $p'$ has the same root with multiplicity $2$ and we can apply Corollary \ref{corpoly}.

Nevertheless,  if we denote by $\alpha_i \in \mathbb{C}$ for $i=1,2,3$ the three roots of $p$, namely
$$p(z)=\delta(z-\alpha_1)(z-\alpha_2)(z-\alpha_3)$$
where $\delta \in \mathbb{C} \setminus \{0\}$, a straightforward computation yields that  $p'$ has a non-zero root of multiplicity $2$ if and only if $\alpha_1+\alpha_2+\alpha_3\not =0$ and $\delta^2(\alpha_1+\alpha_2+\alpha_3)^2=3\delta^2(\alpha_1 \alpha_2+\alpha_1 \alpha_3 + \alpha_2 \alpha_3),$ or equivalently $\alpha_1+\alpha_2+\alpha_3\not =0$ and
\begin{equation} \label{qua}
\alpha_1^2+\alpha_2^2+\alpha_3^2=\alpha_1 \alpha_2+\alpha_1 \alpha_3 + \alpha_2 \alpha_3.
\end{equation}
Clearly, in both cases Corollary \ref{corpoly} applies. But, if $\alpha_1=\alpha_2$ then,  by
\eqref{qua}, $2\alpha_1^2+\alpha_3^2=\alpha_1^2+2\alpha_1 \alpha_3$  and hence, $\alpha_1=\alpha_2=\alpha_3$ so Corollary \ref{corpoly} does not apply. More precisely, in order to apply Corollary \ref{corpoly}, $p$ must have either three (different) roots satisfying \eqref{qua} or a root of multiplicity three:

\begin{cor} Let $\X$ be an infinite dimensional separable complex Banach space, $T\in L(\X)$ quasinilpotent and $p(z)=az^3+bz^2+cz+d$ with $b \in \mathbb{C} \setminus \{0\}$ and $b^2=3ac$. Then $T$  has a nontrivial closed invariant subspace if and only if $p(T)$ has a nontrivial closed invariant subspace.
\end{cor}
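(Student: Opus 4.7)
The plan is to verify that the cubic $p$ satisfies the hypothesis of Corollary \ref{corpoly} with $n=3$, namely that $p'$ has a non-zero root of multiplicity $n-1=2$, and then simply invoke that corollary.

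First I would check that the leading coefficient $a$ is non-zero: if $a=0$, then the assumption $b^{2}=3ac$ forces $b=0$, contradicting $b\in\mathbb{C}\setminus\{0\}$. So $p$ has honest degree $3$, and one may compute $p'(z)=3az^{2}+2bz+c$. The discriminant of this quadratic is
$$
(2b)^{2}-4\cdot 3a\cdot c=4(b^{2}-3ac),
$$
which vanishes precisely under the stated hypothesis $b^{2}=3ac$. Hence $p'$ has a double root, namely $z_{0}=-b/(3a)$. Because $b\neq 0$ (and $a\neq 0$), this root $z_{0}$ is non-zero.

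Thus $p$ is a polynomial of degree $n=3$ whose derivative has a non-zero root of multiplicity $n-1=2$, which is exactly the hypothesis of Corollary \ref{corpoly}. That corollary then gives the stated equivalence: $T$ has a nontrivial closed invariant subspace if and only if $p(T)$ does.

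There is essentially no obstacle here; the whole content of the statement is the elementary observation that the algebraic condition $b\neq 0$, $b^{2}=3ac$ is precisely the condition for a cubic to be an affine translate of a pure cube $\alpha(z-\lambda)^{3}+\mu$ with $\lambda\neq 0$, which is the form in which Corollary \ref{corpoly} was applied in its proof. If one preferred a self-contained argument, one could instead write $p(z)=a(z-z_{0})^{3}+\mu$ explicitly (with $z_{0}=-b/(3a)$ and $\mu=p(z_{0})$) by completing the cube, and then argue directly as in the proof of Corollary \ref{corpoly}: $p(T)$ has a nontrivial closed invariant subspace iff $(T-z_{0}I)^{3}$ does, and by Theorem \ref{thm1} applied with $\lambda=z_{0}\neq 0$ and $n=3$, this is equivalent to $T$ having a nontrivial closed invariant subspace.
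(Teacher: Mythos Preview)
Your proof is correct and follows essentially the same approach as the paper: the discussion preceding the corollary observes that for a cubic $p(z)=az^{3}+bz^{2}+cz+d$, the derivative $p'$ has a non-zero double root precisely when $b\neq 0$ and $b^{2}=3ac$, whereupon Corollary~\ref{corpoly} applies directly. Your additional remark that $a\neq 0$ is forced by the hypotheses (since $a=0$ together with $b^{2}=3ac$ would give $b=0$) is a useful clarification that the paper leaves implicit.
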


\smallskip

For polynomials of degree $2$, the situation turns out to be more general:

\begin{thm} \label{propsq1} Let $\X$ be an infinite dimensional separable complex Banach space and $T\in L(\X)$ quasinilpotent. Then $T$  has a nontrivial closed invariant subspace if and only if $p(T)$ has a nontrivial closed invariant subspace for some (and then every) $p(z)=az^2+bz+c$ with $b \in \mathbb{C} \setminus \{0\}.$ Moreover, $T$  has a nontrivial closed invariant subspace if and only if $(p(T))^n$ has a nontrivial closed invariant subspace for some (and then every) $n\geq 1$ and $p(z)=az^2+bz+c$ with $b \in \mathbb{C} \setminus \{0\}.$
\end{thm}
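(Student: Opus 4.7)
The plan is to reduce the moreover statement to the first part of the theorem together with Theorem~\ref{thm1}. The direction ``$T$ has a nontrivial closed invariant subspace $\Rightarrow (p(T))^n$ has one'' is automatic, since any $T$-invariant closed subspace is also invariant under every polynomial in $T$. For the converse, I aim to show that $(p(T))^n$ has a nontrivial closed invariant subspace if and only if $p(T)$ does, and then to invoke the first part of the theorem to pass from $p(T)$ to $T$.

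When $a=0$, the operator $p(T) = bT + cI$ has the same lattice as $T$, and (for $c \neq 0$) $(p(T))^n = q(T)$ with $q(z) = (bz+c)^n$, whose derivative has a nonzero root of multiplicity $n-1$ at $-c/b$; Corollary~\ref{corpoly} then gives that $(p(T))^n$ has a nontrivial closed invariant subspace if and only if $T$ does. When $a \neq 0$, I rewrite $p(z) = a(z - \lambda_0)^2 + \mu$ with $\lambda_0 = -b/(2a) \neq 0$ and $\mu = c - b^2/(4a)$, so that the first part of the theorem already gives $p(T)$ has a nontrivial closed invariant subspace if and only if $T$ does. Since $T$ is quasinilpotent, $\sigma((p(T))^n) = \{c^n\}$, and if $c \neq 0$ this spectrum avoids the origin, allowing the argument used in the proof of Theorem~\ref{thm1} to be repeated: on a connected neighborhood $\mathcal{V}$ of the full spectrum of $(p(T))^n$, I choose an analytic and one-to-one branch of $f(z) = z^{1/n}$ and set $V := f((p(T))^n)$ via the holomorphic functional calculus. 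Then $V$ commutes with $p(T)$, $V^n = (p(T))^n$, and by Theorem~2.14 of~\cite{Radjavi}, $V$ has the same lattice as $(p(T))^n$. Proposition~\ref{Matache} applied to the commuting pair $(V, p(T))$ then produces a common nontrivial closed invariant subspace (unless both lattices are trivial), showing that $p(T)$ has one.

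The main obstacle is the remaining case $a \neq 0$ and $c = 0$, in which $p(T)$ is itself quasinilpotent, $\sigma((p(T))^n) = \{0\}$, and the holomorphic $n$-th root trick is unavailable. Here I plan to exploit the factorization $(p(T))^n = a^n T^n (T - \lambda_1 I)^n$ with $\lambda_1 = -b/a \neq 0$, together with the invertibility of $(T - \lambda_1 I)^n$ (whose spectrum is $\{(-\lambda_1)^n\} \neq \{0\}$). After first disposing of the kernel/range degeneracies---noting $\ker((p(T))^n) = \ker(T^n)$ is a $T$-invariant closed subspace, which reduces to the case where $T$ is injective with dense range---I would, given a nontrivial closed invariant subspace $\M$ of $(p(T))^n$, use the commuting action of the invertible $(T - \lambda_1 I)^n$ on $\M$ to extract a nontrivial proper closed subspace invariant for $(T - \lambda_1 I)^n$ (for instance, via the intersection $\bigcap_{k \ge 0} \{x : (T - \lambda_1 I)^{nk} x \in \M\}$ or the dual closed linear span of its iterates). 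Once such a subspace is obtained, Theorem~\ref{thm1} applied with $\lambda = \lambda_1 \neq 0$ delivers the required nontrivial closed invariant subspace for $T$.
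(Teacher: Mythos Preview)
For the cases with $p(0)=c\neq 0$ (so that $\sigma((p(T))^n)=\{c^n\}\neq\{0\}$), your argument is essentially the paper's: complete the square and use the holomorphic $n$-th root together with Proposition~\ref{Matache}, exactly as in the proof of Theorem~\ref{thm1}. The paper is more economical: it writes $q(T)^n=\big((T+\tfrac{b}{2a}I)^2+\lambda I\big)^n$ with $\lambda=\tfrac{c}{a}-\tfrac{b^2}{4a^2}$ and invokes Theorem~\ref{thm1} directly (once if $\lambda=0$, twice if $\lambda\neq 0$), rather than separating the passage $(p(T))^n\to p(T)\to T$ into two stages as you do.

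The genuine gap is your handling of $a\neq 0$, $c=0$. The two candidates you propose, $\bigcap_{k\ge 0}(T-\lambda_1 I)^{-nk}\M$ and the closed linear span of $\bigcup_{k\ge 0}(T-\lambda_1 I)^{nk}\M$, are indeed closed and $(T-\lambda_1 I)^n$-invariant, but nothing you say prevents the first from collapsing to $\{0\}$ or the second from being all of $\X$; invertibility of $(T-\lambda_1 I)^n$ and $(p(T))^n$-invariance of $\M$ give no control over either extreme. Note that in this case $p(T)$ is itself quasinilpotent, so the implication ``$(p(T))^n$ has a nontrivial closed invariant subspace $\Rightarrow$ $p(T)$ does'' is exactly an instance of the Halmos question for a quasinilpotent operator, and your sketch supplies no new idea to resolve it. You also pass over the parallel degeneracy $a=c=0$, where $(p(T))^n=b^nT^n$ presents the identical obstruction. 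The paper, for its part, does not isolate $c=0$ at all: it applies Theorem~\ref{thm1} uniformly once $\lambda\neq 0$, though that invocation likewise tacitly relies on $\sigma(q(T))=\{c/a\}\neq\{0\}$.
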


\begin{proof} If $a=0$ then the result follows from Theorem \ref{thm1}. Suppose that $a \not =0$ and  complete squares to get $q(z):=\frac{p(z)}{a}=(z+\frac{b}{2a})^2+(\frac{c}{a}-\frac{b^2}{4a^2}).$ Clearly, for every $n\geq 1$, $q(T)^n$ has a nontrivial closed invariant subspace if and only if $p(T)^n$ has a nontrivial closed invariant subspace.
If $\frac{c}{a}= \frac{b^2}{4a^2}$ then $q(T)^n=(T+\frac{b}{2a}I)^{2n}$ and the conclusion follows again by Theorem \ref{thm1}. Finally, if $\frac{c}{a}\not = \frac{b^2}{4a^2},$ take $\lambda:=\frac{c}{a}-\frac{b^2}{4a^2} \not =0.$ Then, by Theorem \ref{thm1}, $q(T)^n=((T+\frac{b}{2a}I)^2+\lambda I)^n$ has a nontrivial closed invariant subspace if and only if $(T+\frac{b}{2a}I)^2$ has a nontrivial closed invariant subspace, or equivalently  $T$ has a nontrivial closed invariant subspace, where we used again Theorem \ref{thm1}. The converse always holds.
\end{proof}

\begin{cor} Let $T \in L(\X)$ quasinilpotent and $\lambda, \mu \in \mathbb{C}$ such that $\lambda \not = -\mu.$ Then $T$  has a nontrivial closed invariant subspace if and only if for some (and then every) $n\geq 1,$ $(T-\lambda I)^n(T-\mu I)^n$ has a nontrivial closed invariant subspace.
\end{cor}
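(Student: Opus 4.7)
The plan is to recognize the corollary as an immediate consequence of Theorem \ref{propsq1}. First, I would observe that $T-\lambda I$ and $T-\mu I$ commute, since they are both polynomials in $T$. Therefore, the product can be rearranged as
$$(T-\lambda I)^n (T-\mu I)^n = \bigl[(T-\lambda I)(T-\mu I)\bigr]^n = p(T)^n,$$
where $p(z) := (z-\lambda)(z-\mu) = z^2 - (\lambda+\mu) z + \lambda\mu$.

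Next, I would examine the coefficients of $p$. With the notation of Theorem \ref{propsq1}, we have $a=1$, $b=-(\lambda+\mu)$ and $c=\lambda\mu$. The hypothesis $\lambda\neq -\mu$ guarantees exactly that $b\neq 0$, so $p$ lies in the class of quadratic polynomials covered by Theorem \ref{propsq1}.

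Applying that theorem, $T$ has a nontrivial closed invariant subspace if and only if $p(T)^n$ has a nontrivial closed invariant subspace for some (equivalently every) $n\geq 1$. Substituting back the factored form yields the conclusion. There is no genuine obstacle: the entire argument reduces to the commutativity of $T-\lambda I$ and $T-\mu I$ together with a direct verification that $p(z)=(z-\lambda)(z-\mu)$ satisfies the hypothesis of Theorem \ref{propsq1} precisely when $\lambda\neq -\mu$.
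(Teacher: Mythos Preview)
Your proposal is correct and follows essentially the same approach as the paper: rewrite $(T-\lambda I)^n(T-\mu I)^n$ as $[(T-\lambda I)(T-\mu I)]^n=(T^2-(\lambda+\mu)T+\lambda\mu I)^n$, observe that the linear coefficient $-(\lambda+\mu)$ is nonzero by hypothesis, and invoke Theorem~\ref{propsq1}. You have merely spelled out the commutativity justification and the identification of coefficients a bit more explicitly than the paper does.
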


\begin{proof} Note that $(T-\lambda I)^n(T-\mu I)^n=[(T-\lambda I)(T-\mu I)]^n=(T^2-(\lambda+\mu)T+ \lambda\mu I)^n$ with $\lambda+\mu \not =0.$ Then the result follows from Theorem \ref{propsq1}.
\end{proof}

\bigskip

Assuming further properties on the spectrum of the operator, one may deduce that for $k\geq 2$ the lattices of $T$ and $T^k$ coincide:

\begin{prop} \label{propcyclicn} Let $T \in L(\X)$ be such that $0$ is in the unbounded component of $\rho(T^k)$ for some $k \geq 2.$ Then $T$ is cyclic if and only if $T^k$ is cyclic. Moreover, the set of cyclic vectors coincide. In particular, the lattices of $T$ and $T^k$ coincide.
\end{prop}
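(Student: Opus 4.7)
The backward implication is immediate, since for every $x \in \X$ the containment
\[
\overline{\Span}\{(T^k)^j x : j \geq 0\} \subseteq \overline{\Span}\{T^n x : n \geq 0\}
\]
shows that every $T^k$-cyclic vector is already $T$-cyclic. My plan is to prove the stronger assertion that these two closed cyclic subspaces in fact coincide for every $x \in \X$. From this, all three assertions of the proposition follow uniformly: the forward implication is then automatic, the sets of cyclic vectors of $T$ and $T^k$ agree, and the lattices coincide, since if $N \in \lat(T)$ and $x \in N$ then the $T^k$-cyclic subspace generated by $x$ equals the $T$-cyclic one and is contained in $N$, so $N \in \lat(T^k)$; the converse is symmetric.

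Set $M := \overline{\Span}\{(T^k)^j x : j \geq 0\}$. Since $M$ is closed, $\{p(T^k)x : p\ \text{polynomial}\}$ is dense in $M$, and $T$ commutes with every polynomial in $T^k$, the $T$-invariance of $M$ reduces by continuity of $T$ to the single assertion $Tx \in M$. Indeed, once that is known, $T\,p(T^k)x = p(T^k)\,Tx \in M$ for every polynomial $p$, and continuity extends this to all of $M$. The task is therefore to secure $Tx \in M$ using the hypothesis.

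Following the strategy of the proof of Theorem \ref{thm1}, the assumption that $0$ lies in the unbounded component of $\rho(T^k)$ permits the choice of a connected open neighborhood $\mathcal V$ of $\eta(\sigma(T^k))$ with $0 \notin \mathcal V$ on which a branch of $f(z) = z^{1/k}$ is analytic and one-to-one. The operator $U := f(T^k)$ is then a bounded $k$-th root of $T^k$ that commutes with $T$, and Theorem 2.14 in \cite{Radjavi} yields $\lat(U) = \lat(T^k)$.

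The main obstacle is the final identification $U = T$, which would at once give $\lat(T) = \lat(U) = \lat(T^k)$ and coincidence of cyclic subspaces. By the spectral mapping theorem, $U = T$ is equivalent to $f(z^k) = z$ on $\sigma(T)$, i.e., to the statement that $\sigma(T)$ lies in the image of the chosen branch of $z^{1/k}$. The hypothesis on the position of $0$ relative to $\sigma(T^k)$ is the structural ingredient that allows $f$ to be selected so as to capture $\sigma(T)$ on a single sheet of the covering $z \mapsto z^k$; carrying out this selection and verifying $U = T$ via the holomorphic functional calculus is the crux of the argument.
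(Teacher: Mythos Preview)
Your proposal is incomplete: you correctly isolate the identification $U=f(T^k)=T$ as the decisive step but then stop, and the justification you sketch is not sound. The sentence ``By the spectral mapping theorem, $U=T$ is equivalent to $f(z^k)=z$ on $\sigma(T)$'' is wrong: the spectral mapping theorem only determines $\sigma(U)$, not the operator itself. What is actually needed is the composition rule of the Riesz functional calculus, $f(T^k)=(f\circ g)(T)$ with $g(z)=z^k$, which yields $U=T$ once $f(z^k)=z$ on a \emph{neighbourhood} of $\sigma(T)$, i.e.\ once $\sigma(T)$ lies in the image sheet $f(\mathcal V)$ of the chosen branch. You assert that the hypothesis on $0$ makes such a choice possible, but you give no argument, and that is precisely the content that must be supplied.

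The paper's proof is organised differently and avoids both your reduction to $Tx\in M$ (which you set up and then abandon) and the appeal to Theorem~2.14 of \cite{Radjavi}. Following \cite{Ansari}, it uses the hypothesis to draw a curve $\Gamma\subset\rho(T^k)$ joining $0$ to $\infty$, so that $\Omega:=\mathbb{C}\setminus\Gamma$ is simply connected and contains $\sigma(T^k)$; Runge's theorem then produces polynomials $p_n\to z^{1/k}$ uniformly on compact subsets of $\Omega$, and the Riesz calculus gives $p_n(T^k)\to T$ in operator norm. From this, $T^jx=\lim_n (p_n(T^k))^jx$ lies in the closed $T^k$-cyclic subspace of $x$ for every $j$, and cyclic vectors transfer. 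Note that the paper's assertion $p_n(T^k)\to T$ \emph{is} the identification $f(T^k)=T$ you flagged as the crux; the Radjavi--Rosenthal detour in your argument buys nothing, since once that identification is in hand the lattice equality is immediate anyway.
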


\begin{proof} Arguing as in \cite[Proposition 4.2]{Ansari}, there exists a curve $\gamma$ in $\rho(T^k)$ joining $0$ to $\infty.$ Since $f(z)=z^{1/k}$ is analytic on the simply connected open set $\Omega := \mathbb{C} \setminus \Gamma,$ Runge’s Theorem yields a sequence $(p_n)_{n \geq 1}$ of polynomials such that $(p_n)_{n \geq 1}$ converges to $f$ uniformly on compact subsets of $\Omega.$ Since $\sigma(T^k) \subseteq \Omega,$ the Riesz functional calculus shows that $p_n(T^k)$ converges in the operator norm to $T.$
	
Now let $x$ be a cyclic vector for $T.$ Then for every $j \geq 0,$
$$\underset{n \ra \infty}{\lim}(p_n(T^k))^jx = T^jx.$$ It follows that $x$ must also be cyclic for $T^k.$ The converse always holds. The last statement is a straightforward consequence.
\end{proof}

\section{A matrix approach} \label{Section 3}
As mentioned in the Introduction, in 1970  Halmos \cite{Halmos} set ten open problems concerning operators in the setting of Hilbert spaces. In the discussion, a natural question arose:  if $T^2$ has a nontrivial closed invariant subspace, must $T$ have a nontrivial closed invariant subspace also? Recently, in \cite{Foias}, the authors addressed a matrix approach suggesting the possibility that this latter question might be equivalent to Invariant Subspace Problem for operators in Hilbert spaces.

In this section, we consider such approach in the setting of infinite dimensional separable complex Banach spaces $\X$. Let us consider the Banach space $\X \times \X$ endowed with the norm
$$\Vert \begin{bmatrix} x \\ y \end{bmatrix}\Vert=\Vert x \Vert +\Vert y \Vert,$$
for $x,y \in \X.$
Let $T \in L(\X)$ and define $A_T \in L(\X \times \X)$ by
$$
A_T\begin{bmatrix}
	x \\ y
\end{bmatrix}:=\begin{bmatrix}
y \\ Tx \end{bmatrix}=\begin{bmatrix}
0 & I \\ T & 0  \end{bmatrix} \begin{bmatrix}
x \\ y \end{bmatrix}.
$$
Foias, Jung, Ko and Pearcy considered the operator $A_T$, noting that $A_T^2$ has always a nontrivial closed invariant subspace. In this setting, the authors studied under which conditions $A_T$ has a nontrivial closed invariant subspace. The following result shows a necessary and sufficient conditions for any $T \in L(\X)$  to have a nontrivial closed invariant subspace.

\begin{thm} \label{teo1} Let $T \in L(\X).$ Then $T$ has a nontrivial closed invariant subspace if and only if $A_T$ and $A_{-T}$ has a common nontrivial closed invariant subspace.
\end{thm}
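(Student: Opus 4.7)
The plan is to prove the two implications separately; the forward direction is a direct construction, while the reverse direction follows by extracting information from $\KK$ along the coordinate copies of $\X$ after exploiting that invariance under both $A_T$ and $A_{-T}$ is equivalent to invariance under their off-diagonal pieces.

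For the forward implication, suppose $\M$ is a nontrivial closed $T$-invariant subspace of $\X$. Then $\M \times \M$ is nontrivial and closed in $\X \times \X$, and from the formulas $A_{\pm T}(x,y)=(y,\pm Tx)$ together with $T\M\subseteq\M$ one reads off at once that $A_T(\M\times\M)\subseteq\M\times\M$ and $A_{-T}(\M\times\M)\subseteq\M\times\M$, giving a common nontrivial closed invariant subspace.

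For the reverse implication, let $\KK$ be a common nontrivial closed invariant subspace of $A_T$ and $A_{-T}$. The key observation is that $\KK$ is also invariant under the two linear combinations
$$P_1:=\tfrac{1}{2}(A_T+A_{-T})=\begin{bmatrix}0 & I \\ 0 & 0\end{bmatrix},\qquad P_2:=\tfrac{1}{2}(A_T-A_{-T})=\begin{bmatrix}0 & 0 \\ T & 0\end{bmatrix},$$
which act as $(x,y)\mapsto(y,0)$ and $(x,y)\mapsto(0,Tx)$ respectively. I then introduce
$$\M:=\{x\in\X:(x,0)\in\KK\},\qquad \N:=\{y\in\X:(0,y)\in\KK\},$$
both closed as preimages of $\KK$ under the continuous embeddings $x\mapsto(x,0)$ and $y\mapsto(0,y)$. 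Since $\KK$ is $A_T$-invariant it is also $A_T^2=T\oplus T$-invariant, and evaluating $A_T^2$ on $(x,0)$ and $(0,y)$ in $\KK$ shows $T\M\subseteq\M$ and $T\N\subseteq\N$.

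If either $\M$ or $\N$ is nontrivial in $\X$ we are done; otherwise each lies in $\{\{0\},\X\}$, and I would treat the four combinations using the invariance of $\KK$ under $P_1$ and $P_2$. The cases $\M=\N=\{0\}$, $\M=\{0\}$ with $\N=\X$, and $\M=\N=\X$ should each force $\KK$ to be $\{0\}$ or $\X\times\X$, contradicting nontriviality. The remaining case $\M=\X$, $\N=\{0\}$ gives $\X\times\{0\}\subseteq\KK$, and applying $P_2$ to $(x,0)$ forces $Tx\in\N=\{0\}$ for every $x\in\X$, so $T=0$; any proper nontrivial closed subspace of $\X$ (which exists since $\X$ is infinite-dimensional separable) is then $T$-invariant. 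I expect this last case to be the main subtlety, and it is precisely what makes the common invariance under \emph{both} $A_T$ and $A_{-T}$ essential: $\X\times\{0\}$ is always $A_T^2$-invariant regardless of $T$, but is $A_T$-invariant only when $T=0$.
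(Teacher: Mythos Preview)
Your proof is correct and follows essentially the same route as the paper: both define the coordinate slices $\M,\N$ (the paper's $\N_1,\N_2$), note they are closed and $T$-invariant via $A_T^2=T\oplus T$, and exploit the combination $A_T-A_{-T}$ (your $2P_2$) to extract a nonzero element of the second slice. The only organizational difference is that the paper first reduces, without loss of generality, to $T$ injective with dense range (otherwise $\ker T$ or $\overline{\ran T}$ already gives a nontrivial invariant subspace), which collapses your four-case analysis into a direct check that $\N_2\neq\{0\}$ and $\N_2\neq\X$; your residual case $\M=\X$, $\N=\{0\}\Rightarrow T=0$ is exactly what that reduction would have absorbed.
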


\begin{proof}
In order to prove the sufficiency, suppose that $T$ has a nontrivial closed invariant subspace $\St.$ Then $\M:=\St \times \St$ is a  common nontrivial closed invariant subspace for $A_T$ and $A_{-T}$. In fact, it is clear that $\M$ is closed and nontrivial. Also if
$\begin{bmatrix}
		s_1 \\ s_2
	\end{bmatrix} \in \M$ then
	$$A_T\begin{bmatrix}
		s_1 \\ s_2
	\end{bmatrix}=\begin{bmatrix}
		s_2 \\ Ts_1
	\end{bmatrix} \in \St \times \St \mbox{ and } A_{-T}\begin{bmatrix}
		s_1 \\ s_2
	\end{bmatrix}=\begin{bmatrix}
		s_2 \\ -Ts_1
	\end{bmatrix} \in \St \times \St.$$

For the necessity, without loss of generality, we may assume that $T$ is injective and has dense range.

Suppose that $\N \subseteq \X \times \X$ is a common nontrivial closed invariant subspace of $A_T$ and $A_{-T}$ and consider
$$\N_1:=\{x: \begin{bmatrix}
		x \\ 0 \end{bmatrix} \in \N\} \subseteq \X \mbox{ and }\N_2:=\{y:  \begin{bmatrix}
		0 \\ y \end{bmatrix} \in \N  \}\subseteq \X.$$
Both $\N_1$ and $\N_2$ are closed subspaces. Note that
$$A_T^2\begin{bmatrix}
	x \\ y \end{bmatrix}=\begin{bmatrix}
		Tx \\ Ty \end{bmatrix}$$
and $A_T^2(\N) \subseteq \N.$  Then $\N_1$ and $\N_2$ are both invariant subspaces under $T.$ Also, if $x \in \N_2$ then
$\begin{bmatrix}
		0 \\  x \end{bmatrix} \in \N$ and
$$A_T\begin{bmatrix}
		0 \\ x
	\end{bmatrix}=\begin{bmatrix}
		x\\ 0
	\end{bmatrix} \in \N
$$
so that $x \in \N_1.$  Hence $\N_2 \subseteq \N_1$ and, from $\N_1 \times \N_2 \subseteq \N$ it follows that $\N_2$ is not $\X.$
	
We only need to show that $\N_2$ is not $\{0\}.$ Since $\N\not =\{0\} \times \{0\}$ there exists $\begin{bmatrix}
	x \\ y
\end{bmatrix} \not = \begin{bmatrix}
0 \\ 0
\end{bmatrix}$ such that $\begin{bmatrix}
x \\ y
\end{bmatrix} \in \N.$
If $x=0$ then $y\not =0$ and $\begin{bmatrix}
	0 \\ y
\end{bmatrix} \in \N,$ so that $y \in \N_2$ and $\N_2 \not =\{0\}.$ If $x \not =0,$ then
$$A_T\begin{bmatrix}
		x \\ y
	\end{bmatrix}=\begin{bmatrix}
		y \\ Tx
	\end{bmatrix} \in \N, \mbox { and } A_{-T}\begin{bmatrix}
		x \\ y
	\end{bmatrix}=\begin{bmatrix}
		y \\ -Tx
	\end{bmatrix} \in \N.$$
	So that  $$\begin{bmatrix}
		0\\ 2Tx
	\end{bmatrix}= \begin{bmatrix}
		y\\ Tx
	\end{bmatrix}- \begin{bmatrix}
		y\\ -Tx
	\end{bmatrix} \in \N.$$
	
Then $Tx \in \N_2.$ Since $T$ is assumed to be injective, $Tx \not =0$ and $\N_2 \not =\{0\}.$  Hence  $\N_2$ is a nontrivial closed invariant subspace of $T$, which proves the theorem.
\end{proof}

As a consequence, we note the following regarding \emph{hyperinvariant subspaces}:

\begin{cor} Let $T \in L(\X).$ If $A_T^2$ has a nontrivial closed hyperinvariant subspace then $T$ has a nontrivial closed invariant subspace.
\end{cor}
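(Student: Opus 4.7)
The plan is to leverage Theorem \ref{teo1} by showing that any hyperinvariant subspace of $A_T^2$ is automatically a common invariant subspace of the pair $\{A_T, A_{-T}\}$. The key observation is that
$$A_T^2 = \begin{bmatrix} T & 0 \\ 0 & T \end{bmatrix},$$
which is a block-diagonal scalar multiple of $T$ on each coordinate.

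First, I would note that $A_T$ commutes with $A_T^2$, which is immediate. Then I would perform the short computation checking that $A_{-T}$ also commutes with $A_T^2$: indeed,
$$A_T^2 A_{-T} = \begin{bmatrix} T & 0 \\ 0 & T \end{bmatrix}\begin{bmatrix} 0 & I \\ -T & 0 \end{bmatrix} = \begin{bmatrix} 0 & T \\ -T^2 & 0 \end{bmatrix} = \begin{bmatrix} 0 & I \\ -T & 0 \end{bmatrix}\begin{bmatrix} T & 0 \\ 0 & T \end{bmatrix} = A_{-T}A_T^2,$$
so both $A_T$ and $A_{-T}$ belong to the commutant of $A_T^2$.

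Consequently, any nontrivial closed hyperinvariant subspace $\N$ of $A_T^2$ is, in particular, invariant under both $A_T$ and $A_{-T}$, giving a nontrivial common closed invariant subspace for this pair. An application of Theorem \ref{teo1} then yields a nontrivial closed invariant subspace for $T$, which is what we wanted. The argument is essentially a one-line reduction once the commutation $A_{-T}A_T^2 = A_T^2 A_{-T}$ is noted, so I do not anticipate any genuine obstacle; the only point that needs care is verifying that the hyperinvariance is being used with respect to the correct operator, namely $A_T^2$ rather than $A_T$.
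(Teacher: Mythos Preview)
Your argument is correct and matches the paper's own proof exactly: both observe that $A_T$ and $A_{-T}$ commute with $A_T^2$, so a hyperinvariant subspace for $A_T^2$ is a common invariant subspace for $A_T$ and $A_{-T}$, and then invoke Theorem~\ref{teo1}. The only difference is that you write out the commutation check explicitly, whereas the paper simply asserts it.
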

\begin{proof} Clearly $A_T$ and $A_{-T}$ commute with $A_T^2.$ Then $A_T$ and $A_{-T}$ has a common nontrivial closed invariant subspace. Then, by  Theorem \ref{teo1}, $T$ has a nontrivial closed invariant subspace.
\end{proof}

\bigskip

From now on we consider bounded linear operators on a separable complex Hilbert space $\HH.$
One observation about $A_T$ and $A_{-T}$ in the Hilbert space setting is the following:

\begin{prop}
Assume that $T$ is an injective, dense range linear bounded operator acting on $\HH.$ Then $A_T$ and $iA_{-T}$ are quasisimilar operators.
\end{prop}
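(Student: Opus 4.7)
The plan is to realize the quasisimilarity by an explicit bounded intertwining operator on $\HH\times\HH$. A helpful initial observation is that
$$A_T^2=\begin{bmatrix} T & 0 \\ 0 & T\end{bmatrix}=(iA_{-T})^2,$$
so any intertwiner $X$ between $A_T$ and $iA_{-T}$ must commute with $T\oplus T$; in particular it is natural to look first for scalar-diagonal $X$.

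Concretely, I would try $X=\begin{bmatrix} \alpha I & 0 \\ 0 & \beta I\end{bmatrix}$. Writing out $XA_T$ and $(iA_{-T})X$ block-by-block reduces the identity $XA_T=(iA_{-T})X$ to the single relation $\alpha=i\beta$; taking $\beta=1$, $\alpha=i$ gives
$$D:=\begin{bmatrix} iI & 0 \\ 0 & I\end{bmatrix},\qquad DA_T=(iA_{-T})D,$$
with both products equal to $\begin{bmatrix} 0 & iI \\ T & 0\end{bmatrix}$. Since $D$ is a bounded bijection on $\HH\times\HH$ with inverse $D^{-1}=\begin{bmatrix} -iI & 0 \\ 0 & I\end{bmatrix}$, it is a quasi-affinity; taking the two intertwiners in the definition of quasisimilarity to be $D$ and $D^{-1}$ settles the result.

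One remark worth flagging: the construction above actually proves that $A_T$ and $iA_{-T}$ are similar, and uses nothing about $T$. The injectivity and dense-range hypothesis on $T$ appears in the statement most naturally to guarantee that $A_T$ (hence $iA_{-T}$) is itself a quasi-affinity, so that quasisimilarity is a substantive relation here. No serious obstacle is anticipated: once one guesses the diagonal form of $X$, the verification is a direct block-matrix computation.
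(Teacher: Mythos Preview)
Your argument is correct, and in fact it establishes more than the stated proposition: the diagonal operator $D=\begin{bmatrix} iI & 0 \\ 0 & I\end{bmatrix}$ is a bounded \emph{invertible} intertwiner, so $A_T$ and $iA_{-T}$ are similar (hence quasisimilar) for every $T\in L(\HH)$, with no hypothesis on $T$ needed.

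The paper takes a different route. It sets $A:=A_T$, $B:=iA_{-T}$, observes $A^2=B^2$, and uses the single intertwiner
\[
X:=A+B=\begin{bmatrix} 0 & (1+i)I \\ (1-i)T & 0 \end{bmatrix},
\]
checking algebraically that $XA=BX$ and $AX=XB$. Here $X$ is only a quasiaffinity, and precisely because $T$ appears in an off-diagonal block, the injectivity and dense-range hypotheses on $T$ are genuinely used to ensure $\ker X=\ker X^*=\{0\}$. Thus the paper's construction is motivated by the general trick ``$A+B$ intertwines $A$ and $B$ whenever $A^2=B^2$'', which is elegant but yields only quasisimilarity and consumes the hypothesis; your direct diagonal conjugation is more elementary, proves outright similarity, and shows the hypothesis is superfluous for the conclusion as stated. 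Your closing remark that injectivity and dense range of $T$ serve to make $A_T$ itself a quasiaffinity is a reasonable guess at the authors' intent, but in the paper the hypothesis is actually used inside the proof rather than to frame the statement.
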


Recall that two operators $T,\, \tilde{T} \in L(H)$ are \textit{quasisimilar} if there exist operators $X,Y \in L(H)$ with $\ker X = \ker X^* = \ker Y = \ker Y^* = \{0\}$ satisfying $$ TX = X\tilde{T}, \quad YT = \tilde{T}Y.$$ Note that if $T$ and $\tilde{T}$ are quasisimilar and $T$ has a non trivial closed hyperinvariant subspace, so does $\tilde{T}$.

\begin{proof} Set $A:=A_T$ and $B:=iA_{-T}$. Then $A^2=B^2.$ Take $X:=A+B=\begin{bmatrix}
		0 & (1+i)I \\ (1-i)T & 0  \end{bmatrix}.$ Since $T$ is injective and has dense range, the operator $X$ is a quasiaffinity. In addition, $$XA=(A+B)A=A^2+BA=B^2+BA=B(A+B)=BX$$
and
$$AX=A(A+B)=A^2+AB=B^2+AB=(A+B)B=XB,$$
which yields the statement.
\end{proof}

In \cite{Nordgren}, the authors showed that every operator on $\HH$ has a non-trivial closed invariant subspace if and only if every pair of operators $\{A, B\}$ satisfying $p(A)=q(B)=0$ has a common nontrivial closed invariant subspace, with $p$ and $q$ any polynomial of degree $2.$ When $A$ and $B$ commute or anticommute (i.e. $AB=-BA$) the following hold.

\begin{prop} Every operator has a nontrivial closed invariant subspace if and only if every pair $\{A, B\}$ of commuting operators satisfying $A^2=B^2$ has a common nontrivial closed invariant subspace.
\end{prop}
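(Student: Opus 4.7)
The plan is to deduce both implications directly from Proposition \ref{Matache}, together with an essentially trivial specialization of the hypothesis in the backward direction.

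For the forward direction, I would assume that every bounded linear operator on $\HH$ admits a nontrivial closed invariant subspace, and take any commuting pair $A, B \in L(\HH)$ with $A^2 = B^2$. Under this assumption, neither the lattice of $A$ nor the lattice of $B$ is trivial. Proposition \ref{Matache} (applied with $n = 2$) presents the dichotomy: either both lattices are trivial, or $A$ and $B$ have a common nontrivial closed invariant subspace. Since the first alternative is ruled out, we land in the second, as desired.

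For the backward direction, the key observation is that the ``commuting, $A^2 = B^2$'' hypothesis is fulfilled by the degenerate choice $A = B = T$, for any $T \in L(\HH)$. A common nontrivial closed invariant subspace of $\{A, B\} = \{T, T\}$ is nothing other than a nontrivial closed invariant subspace of $T$ itself, so the hypothesis immediately yields the Invariant Subspace Property for every $T$.

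Since both implications reduce to citing Proposition \ref{Matache} and observing the trivial specialization $A = B = T$, there is no genuine obstacle. The only thing worth flagging is that the backward direction does not require any clever construction coupling $A$ and $B$ to $T$ (e.g., via the $A_T$, $A_{-T}$ matrix model of Section \ref{Section 3}), since the constraint ``commuting and $A^2 = B^2$'' is already loose enough to be satisfied tautologically.
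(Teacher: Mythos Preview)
Your proposal is correct and matches the paper's proof essentially line for line: the forward direction invokes Proposition~\ref{Matache} with $n=2$ after noting that the assumed Invariant Subspace Property rules out the trivial-lattice alternative, and the backward direction uses the degenerate pair $A=B=T$.
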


\begin{proof}  Suppose that every operator has a nontrivial closed invariant subspace and let $\{A, B\}$ be a pair of commuting operators satisfying $A^2=B^2.$ Then $A$ (and $B$) has a nontrivial closed invariant subspace and, by Proposition \ref{Matache}, $A$ and $B$ has a common nontrivial closed invariant subspace.
Conversely, let $T \in L(\HH)$ then clearly $\{T,T\}$ is a pair of commuting operators satisfying the hypothesis. Hence $T$ has a nontrivial closed invariant subspace.
\end{proof}

\begin{prop} Every operator has a nontrivial closed invariant subspace if and only if every pair $\{A, B\}$ of anticommuting operators satisfying $A^2=-B^2$ has a common nontrivial closed invariant subspace.
\end{prop}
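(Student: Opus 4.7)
The plan is to prove the two implications separately, following the pattern of the commuting version just proved.

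For the easy direction $(\Leftarrow)$, given any $T \in L(\HH)$, I would verify that the pair $\{A_T, A_{-T}\}$ from Section \ref{Section 3} is anticommuting with $A_T^2 = -A_{-T}^2$. A direct block-matrix computation yields
$$
A_T A_{-T} = \begin{bmatrix} -T & 0 \\ 0 & T \end{bmatrix}, \quad A_{-T} A_T = \begin{bmatrix} T & 0 \\ 0 & -T \end{bmatrix}, \quad A_T^2 = \begin{bmatrix} T & 0 \\ 0 & T \end{bmatrix} = -A_{-T}^2,
$$
so $A_T A_{-T} + A_{-T} A_T = 0$. Hence the pair satisfies the hypothesis of the proposition, so it has a common nontrivial closed invariant subspace, and by Theorem \ref{teo1}, $T$ itself has a nontrivial closed invariant subspace.

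For the nontrivial direction $(\Rightarrow)$, given anticommuting $A, B \in L(\HH)$ with $A^2 = -B^2$, the plan is to reduce to the commuting version of the preceding proposition via an auxiliary construction on $\HH \oplus \HH$. Set
$$
C := \begin{bmatrix} A & 0 \\ 0 & -A \end{bmatrix}, \qquad D := \begin{bmatrix} 0 & iB \\ iB & 0 \end{bmatrix}.
$$
Using $AB = -BA$ one checks $CD = DC$, and using $A^2 = -B^2$ one checks $C^2 = D^2 = \begin{bmatrix} A^2 & 0 \\ 0 & A^2 \end{bmatrix}$. The commuting version then yields a common nontrivial closed invariant subspace $\M$ of $C$ and $D$ in $\HH \oplus \HH$.

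The remaining task, and the step I expect to be the main obstacle, is to extract from $\M$ a common nontrivial closed $A, B$-invariant subspace of $\HH$. Setting $\M_1 := \{x \in \HH : (x, 0) \in \M\}$ and $\M_2 := \{y \in \HH : (0, y) \in \M\}$, the diagonal action of $C$ shows that $\M_1, \M_2$ are $A$-invariant, and the off-diagonal action of $D$ gives $B(\M_1) \subseteq \M_2$ and $B(\M_2) \subseteq \M_1$; thus $\overline{\M_1 + \M_2}$ and $\M_1 \cap \M_2$ are closed $A, B$-invariant subspaces of $\HH$. The difficulty is ensuring one of these is nontrivial, since $\M$ may avoid or entirely contain the coordinate subspaces. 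To handle this I would introduce the involution $J := \begin{bmatrix} I & 0 \\ 0 & -I \end{bmatrix}$, which commutes with $C$ and anticommutes with $D$: then $\M + J\M$ and $\M \cap J\M$ remain $C, D$-invariant and are additionally $J$-invariant, so they decompose along the $\pm 1$-eigenspaces of $J$ (that is, along the coordinate subspaces $\HH \oplus \{0\}$ and $\{0\} \oplus \HH$), forcing the corresponding slices to be nontrivial in the nondegenerate cases. The remaining degenerate cases, where $A$ or $B$ fails to be injective or of dense range, are handled at the outset: then $\kernel A$ or $\clran A$ is already a nontrivial common $A, B$-invariant subspace, since $BA = -AB$ forces $B$ to preserve both.
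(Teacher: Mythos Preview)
Your $(\Leftarrow)$ direction via $A_T,A_{-T}$ and Theorem~\ref{teo1} is exactly the paper's argument.

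For $(\Rightarrow)$ your route is genuinely different from the paper's, and as written it has a gap. The paper does not lift to $\HH\oplus\HH$ at all: it simply observes that the hypotheses $A^2=-B^2$ and $AB=-BA$ force
\[
(A+B)^2=A^2+B^2+AB+BA=0,\qquad (A-B)^2=A^2+B^2-AB-BA=0,
\]
so $A+B$ and $A-B$ are both square-zero, and then invokes the Nordgren--Radjabalipour--Radjavi--Rosenthal theorem on quadratic operators \cite{Nordgren} (which, under the standing assumption that every operator has a nontrivial invariant subspace, yields a common nontrivial closed invariant subspace for any pair annihilated by degree-two polynomials). A common invariant subspace for $A+B$ and $A-B$ is automatically one for $A=\tfrac12((A+B)+(A-B))$ and $B=\tfrac12((A+B)-(A-B))$.

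The gap in your approach is the extraction step. Your $J$-trick produces the $C,D$-invariant subspaces $\overline{\M+J\M}$ and $\M\cap J\M$, but nothing prevents the first from being all of $\HH\oplus\HH$ and the second from being $\{0\}$ simultaneously. Concretely, if the common invariant subspace $\M$ delivered by the commuting version happens to be the graph of an operator $U$ with $UA=-AU$ and $UBU=B$, and $U$ is injective with dense range (a possibility you have not excluded even after disposing of the non-injective and non-dense-range cases for $A$ and $B$), then one computes $\M+J\M=\HH\times\ran U$ and $\M\cap J\M=\ker U\times\{0\}$, so both $J$-symmetrizations are trivial and your slice argument yields nothing. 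You would need a further idea to handle this graph case, and it is not clear what that would be; by contrast, the paper's $(A\pm B)^2=0$ observation sidesteps the issue entirely.
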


\begin{proof}  Suppose that every operator has a nontrivial closed invariant subspace and let  $\{A, B\}$ be a pair of anticommuting operators satisfying $A^2=-B^2.$  Then $(A-B)^2=A^2+B^2-AB-BA=0$ and $(A+B)^2=A^2+B^2+AB+BA=0.$ So that $(A-B)^2=(A+B)^2=0$ and, by \cite[Theorem 1]{Nordgren}, $A+B$ and $A-B$ have a common nontrivial closed invariant subspace. Then $A$ and $B$ have a common nontrivial closed invariant subspace.
	
Conversely, let $T \in L(\HH)$ and take $A=A_T$ and $B=A_{-T}$ then $A^2=-B^2$ and $AB=-BA.$ By hypothesis $A_T$ and $A_{-T}$ has a common nontrivial closed invariant subspace. Then, by Theorem \ref{teo1}, $T$ has a nontrivial closed invariant subspace.
\end{proof}

\section*{Acknowledgements}	
	
\noindent
The authors would like to thank the anonymous referee for a careful reading of the manuscript and for pointing out an oversight in the previous version of the Theorem \ref{thm1}.
	
\medskip

	\affiliationone{Eva A. Gallardo-Guti\'errez\\
Dpto. de An\'alisis Matem\'atico y Matem\'atica Aplicada,\\
Facultad de Matem\'aticas, \\
Universidad Complutense de Madrid  and \\
Instituto de Ciencias Matem\'aticas ICMAT (CSIC-UAM-UC3M-UCM) \\
Madrid, Spain
\email{eva.gallardo@mat.ucm.es}}
% Important: Do not put any empty line here.
\affiliationtwo{Maximiliano Contino \\
Dpto. de An\'alisis Matem\'atico y Matem\'atica Aplicada,\\
Facultad de Matem\'aticas, \\
Universidad Complutense de Madrid,\\
Madrid, Spain and \\
Instituto Argentino de Matemática \\
Alberto P. Calderón-CONICET \\
Buenos Aires, Argentina and \\
Dpto. de Matemática, Facultad de Ingeniería, \\
Universidad de Buenos Aires\\
	Buenos Aires, Argentina
\email{mcontino@fi.uba.ar}}

\end{document}